\newtheorem{lem}{Lemma}
\newtheorem{thm}{Theorem}
\theoremstyle{definition}
\newtheorem{defn}{Definition}
\theoremstyle{remark}
\begin{document}

\title[Knot exteriors with all positive genus surfaces]{Knot exteriors with all compact surfaces of positive genus essentially embedded}


\author{Jo\~{a}o M. Nogueira}
\address{University of Coimbra, CMUC, Department of Mathematics,  Apartado 3008, 3001-454 Coimbra, Portugal\\ nogueira@mat.uc.pt}
\thanks{This work was partially supported by the Centre for Mathematics of the University of Coimbra - UIDB/00324/2020, funded by the Portuguese Government through FCT/MCTES.  This work was also partially supported by the UT Austin | Portugal Colab. } 



\maketitle

\begin{abstract} It is well known that there exist knots with Seifert surfaces of arbitrarily high genus. In this paper, we show the existence of infinitely many knot exteriors where each of which has longitudinal essential surfaces of any positive genus and any number of boundary components.
\end{abstract}

\section{Introduction}
Essential surfaces have played an important role in understanding 3-manifold topology since the second half of the last century. One particularly interesting property is the existence of essential surfaces of arbitrarily large Euler characteristics in some 3-manifolds.  For knot exteriors in particular,  it is well known since the work of Lyon \cite{Lyon} that a knot exterior can have closed essential surfaces and also Seifert surfaces of arbitrarily high genus.  Many more examples of knot exteriors with these properties have been published throughout the years.  For instance,  besides the result of Lyon,  several other collections of knots have been given for which there are Seifert surfaces of arbitrarily high genus as in work of Parris \cite{Parris-78} (see also \cite{W-08}),  Gustafson \cite{Gustafson-81},  Ozawa and Tsutsumi \cite{Ozawa-Tsutsumi-03}  or Tsutsumi \cite{Tsutsumi-04}.  We will further show that a knot exterior can have a collection of longitudinal essential surfaces with arbitrarily large Euler characteristics not only due to large genus, as a collection of Seifert surfaces can have, but also from the number of boundary components.  In fact,  the collection of longitudinal essential surfaces can be with every number of boundary components and all positive genus.

\begin{thm}\label{main}
There are infinitely many knots in the 3-sphere, each of which has in its exterior a longitudinal essential surface of any positive genus and any number of boundary components. 
\end{thm}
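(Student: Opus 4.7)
My plan would be to construct infinitely many knots $K_m\subset S^3$, each of Seifert genus $1$, and for each pair $(g,n)$ with $g,n\geq 1$ to exhibit an essential longitudinal surface $F_{g,n}\subset E(K_m)$ of genus $g$ with exactly $n$ boundary components. The restriction to genus-$1$ knots is dictated by the Thurston norm: any connected longitudinal surface with $n$ boundary components in the exterior of a knot of Seifert genus $g_K$ has genus at least $n(g_K-1)+1$, which equals $1$ for all $n\geq 1$ precisely when $g_K=1$. The knots must further be non-fibered, since for a fibered knot the only essential surfaces representing $n$ times the class of the fiber are $n$ disjoint parallel copies of the fiber, hence disconnected for $n\geq 2$.

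For the construction I would take $K_m$ to be a satellite knot with a non-trivial companion $C$ and pattern $P_m$ in a standard solid torus $V$, where $m$ parameterises a twist in the pattern so that varying $m$ yields infinitely many distinct knot types, distinguished for example by Alexander polynomials or hyperbolic volumes. The pattern $P_m$ should be chosen so that $V\setminus N(P_m)$ admits, for every $n\geq 1$, an essential genus-$1$ surface $\Sigma_n$ with $n$ longitudinal boundary components on $\partial N(P_m)$ and a controlled boundary pattern on $\partial V$. Each $F_{g,n}$ is then obtained by gluing $\Sigma_n$ to an appropriate matching surface in $E(C)$ across the satellite torus $T_m=\partial V$, and then attaching $g-1$ trivial unknotted $1$-handles in $E(K_m)$, disjoint from the rest, to bring the genus up to $g$ without changing the boundary.

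The heart of the proof, and the step I expect to be hardest, is to show each $F_{g,n}$ is essential. The plan is standard in outline: given a hypothetical compressing or $\partial$-compressing disk $D$, put $D$ in general position with respect to $T_m$, minimise $|D\cap T_m|$, and perform innermost-disk and outermost-arc surgeries to reduce to $D\cap T_m=\emptyset$. Then $D$ lies in the companion piece $E(C)$ or the pattern piece $V\setminus N(P_m)$, and in each case the construction must be designed so as to produce a contradiction: in the companion piece from non-triviality of $C$, and in the pattern piece from essentiality of $\Sigma_n$ built into the pattern. The $\partial$-compression case is the more delicate one, because the many longitudinal boundary components of $F_{g,n}$ give rise to many potential outermost arcs, and a careful design of $P_m$ is needed to ensure that each such arc either descends to a $\partial$-compression of $\Sigma_n$ that has been precluded by construction, or bounds a disk parallel into $\partial E(K_m)$.
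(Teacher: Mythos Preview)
Your opening observations are on target: the Thurston norm indeed forces $g\geq n(g_K-1)+1$, so any knot realising all pairs $(g,n)$ with $g\geq 1$ must have Seifert genus $1$, and it cannot be fibered. The paper's knots are exactly of this type.

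However, your construction has a fatal step. You propose to build $F_{g,n}$ by first producing an essential genus-$1$ surface with $n$ boundary components and then ``attaching $g-1$ trivial unknotted $1$-handles in $E(K_m)$, disjoint from the rest, to bring the genus up to $g$.'' Any such handle immediately makes the surface compressible: the cocore disk of a trivially attached tube is a compressing disk. So this method can never yield essential surfaces of genus $g>1$, and the hardest part of the theorem --- getting \emph{all} genera, not just genus $1$ --- is exactly what your outline fails to address. The innermost-disk/outermost-arc argument you sketch afterwards cannot rescue this, because the compressing disks live entirely on one side of the satellite torus and never meet it.

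The paper solves this problem in a genuinely different way. The companion is taken to be a \emph{composite} knot $J=J_1\#J_2$ where $J_1$ is chosen (from earlier work of the author) to contain in its exterior meridional essential surfaces $X_g$ of every genus. The higher genus of $F_g^n$ comes from a single copy of $X_g$ built into the surface, not from stabilisation; the essentiality of $X_g$ in $E(J_1)$ is what replaces your trivial handles with something incompressible. Moreover, rather than a direct cut-and-paste incompressibility argument, the paper assembles all the pieces ($P$, various annuli, and $X_g$) into a branched surface $R_g$, verifies once that $R_g$ is an incompressible branched surface without Reeb components, and then invokes Oertel's theorem to conclude that \emph{every} surface carried by $R_g$ --- in particular each $F_g^n$ --- is essential. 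This packages the innermost/outermost analysis you anticipate into a single lemma about $R_g$, and it handles all $n$ simultaneously.
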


With respect to surfaces of genus zero, we know from the work of Gabai \cite{Gabai-87} when proving the Property R and the Poenaru conjectures,  also related to the Cabling conjecture,  that there are no longitudinal essential planar surfaces in knot exteriors besides the disk bounded by the unknot.  However,  it is known by work of Hatcher and Thurston, Proposition 1(3) of \cite{H-T-85}, that there exists a collection of (non-meridional) essential surfaces with an arbitrarily large number of boundary components in some 2-bridge knot exteriors. As it is not clear in Proposition 1(3) of \cite{H-T-85} whether the surfaces are orientable, the number of boundary components for orientable surfaces can only be claimed to be even, by taking the boundary of a regular neighborhood of a non-orientable surface if that is the case.  Also, as it is well known,  composite knots do not have genus one Seifert surfaces,  and the boundary slope of a Seifert surface in the knot exterior is 0 (longitudinal).  Hence we cannot have a statement as in this theorem for composite knots or for a different boundary slope.\\
This theorem contrasts with other results in the literature.  For instance,  Wilson \cite{W-08} proved that a small knot in $S^3$,  \textit{i.e.} without closed essential surfaces in its complement,  cannot have an infinite number of Seifert surfaces.   
From work of Oertel in \cite{Oertel-02},  on a theorem of Jaco and Sedgwick,  it is finite  the number of compact essential surfaces of uniformly bounded genus (closed or with  boundary) in knot exteriors without essential genus one surfaces (closed or with boundary).  In \cite{Eudave-13},  in work related to the Kervaire conjecture,  Eudave-Mu\~noz proved that any odd number can be realized as the number of boundary components of an essential orientable connected surface properly embedded in a knot exterior. However, not necessarily in the same knot exterior and the genus is not arbitrarily large for the same number of boundary components.\\ 
Besides closed or longitudinal boundary slope,  a similar result exists for meridional surfaces in knot exteriors.  In fact,  in \cite{N-15} the author proved a prime knot exterior can have a collection of meridional essential surfaces with two boundary components and any positive genus.  However,  a collection of essential  surfaces in a knot exterior can be of arbitrarily large Euler characteristics not only because of large genus but also from the number of boundary components.  This was shown first by the author in \cite{N-16} with a collection of essential planar surfaces with arbitrarily large number of boundary components,  and,  even further,  in \cite{N-18} and in \cite{N-21}  with a collection of essential surfaces with independently large genus and number of boundary components in a satellite knot exterior and hyperbolic knot exterior,  respectively. \\

\noindent The paper is organized as follows: In section \ref{section:square double} we define a class of satellite knots that we will use throughout the paper.  In sections \ref{section: branched surface} and \ref{section: surfaces} we construct branched surfaces and use branched surface theory to prove that the surfaces they carry as in the statement of Theorem \ref{main} are essential in the corresponding knot exterior. In section \ref{section:non-parallel}, we extend Theorem \ref{main} by showing that there are knot exteriors which have a collection of many disjoint non-parallel longitudinal surfaces of each topological class of compact surface. Throughout this paper all manifolds are orientable,  all submanifolds are assumed to be in general position and we work in the smooth category.

\section{Square double of a knot}\label{section:square double}

In this section, we will construct a branched surface in the square double exteriors of composite knots, which we use in the proof of Theorem \ref{main}.

First we consider the 2-string tangle $T_1=(B_1; s_1\cup s_2)$ as in Figure \ref{figure:square tangle}(a) which we refer to as the square tangle,  following nomenclature in the literature \cite{Stoimenow-02}.

\begin{figure}[htbp]

\labellist
\small \hair 2pt

\scriptsize
\pinlabel $T_1$ at 80 105
\pinlabel $T_2$ at 288 105

\pinlabel $(a)$ at 80 -10 
\pinlabel $(b)$ at 288 -10 

\endlabellist

\centering
\includegraphics[width=0.55\textwidth]{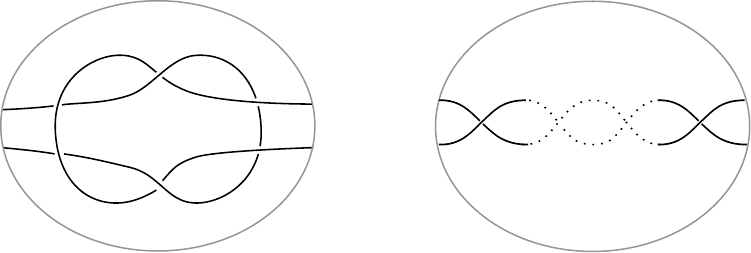}

\caption{: In (a) we have a diagram of the square tangle, denoted $T_1$; in (b) we have represented a  tangle of two twisted parallel arcs following the pattern of a knot $J$, denoted $T_2$.}
\label{figure:square tangle}
\end{figure}

Let $T'_2$ denote the $n/1$ rational tangle, where $n$ is an integer,  with $T'_2=(B_2; p_1\cup p_2)$. 
The arcs $p_1$ and $p_2$ co-bound a disk $D$ in $B_2$ with two arcs in $\partial B_2$,  say $a_1$ and $a_2$. Consider the operation on $T'_2$ where we assume the core of a regular neighborhood of $D$ in $B_2$ follows the pattern of a knot $J$. Hence, the image of the arcs $p_i,\, i=1, 2$, in $B_2$, follow this pattern. That is, through the closure of $p_i$ with an arc in $\partial B_2$ we obtain the knot $J$. We denote the resulting tangle as $T_2(n; J)$, or only as $T_2$,  and we assume that the arcs $a_i$, of $\partial D\cap \partial B_2$,  are in the boundary circle of the diagram disk of $T_2$.   (See Figure \ref{figure:square tangle}(b)). 

\begin{defn} 
For each integer $n$,  consider now the knot $K(n;J)$ obtained as the closure of $T_1$ with $T_2(n;J)$,  such that the boundary circle of their diagram disks,  as in Figure \ref{figure:square tangle},  are identified with opposite orientation and,  for each $i=1,2$,  the end points of $a_i$ are connected to $s_i$.  The knot $K(n;J)$ is referred to as a \textit{square double of $J$}.
\end{defn}

Note that the square knot is a square double of the unknot,  more exactly $K(0; \text{unknot})$.  In case the knot $J$ is non-trivial a square double of $J$ is a satellite knot with companion $J$.  In Figure \ref{figure:examples},  we have diagrams of two examples of square doubles, the square knot and a square double of a trefoil.  In Figure \ref{figure:square knot}, we have a schematic diagram illustration of a square double of $J$.

\begin{figure}[htbp]
	
	\labellist
	\small \hair 2pt
	
	\scriptsize
	
	\pinlabel (a) at 115 -10	 
	\pinlabel (b) at 445 -10 
	
	\endlabellist
	
	\centering
	\includegraphics[width=0.8\textwidth]{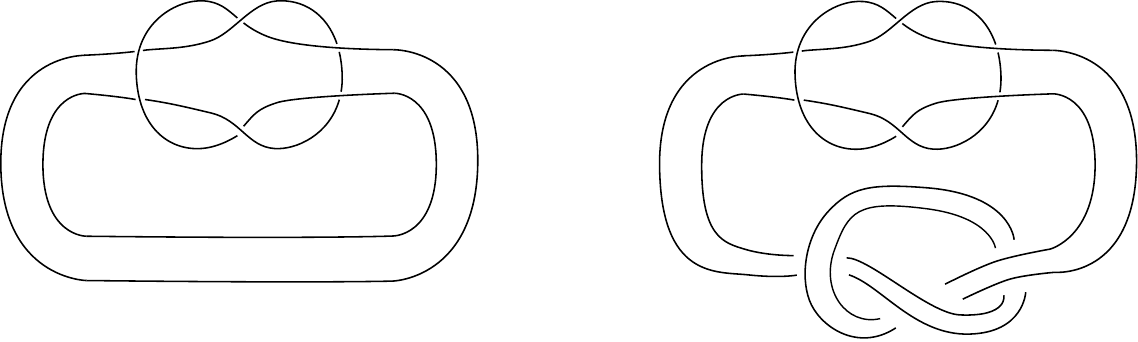}
	
	\caption{: (a) A diagram of the square knot, which is a square double of the unknot; (b) a diagram of a square double of the trefoil knot.}
	\label{figure:examples}
\end{figure}

\begin{figure}[htbp]
	
	\labellist
	\small \hair 2pt
	
	\scriptsize
	
	\pinlabel $T_1$ at 50 80 
	\pinlabel $T_2$ at 125 10 
	
	\endlabellist
	
	\centering
	\includegraphics[width=0.4\textwidth]{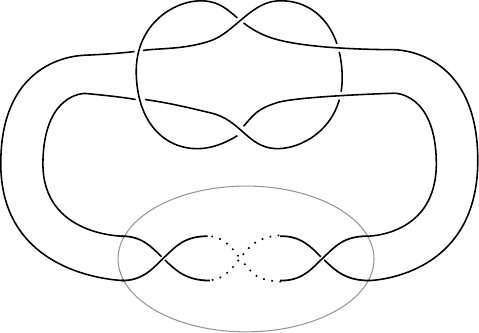}
	
	\caption{: Square double of a knot $J$.}
	\label{figure:square knot}
\end{figure}

\begin{lem}\label{lemma:ribbon}
A square double of a knot is a ribbon 2 knot. 
\end{lem}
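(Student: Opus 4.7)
The plan is to exhibit an explicit ribbon disk for $K(n;J)$ with exactly two ribbon singularities. The construction will exploit the mirror symmetry of the square tangle $T_1$ -- the very feature that makes its prototype $K(0;\text{unknot})$, the square knot, a classical ribbon example -- together with the fact that the tangle $T_2(n;J)$ consists of two parallel strands each carrying the same copy of the pattern $J$ along the disk $D$.

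First I would fix a reflection $\sigma$ of $S^3$ whose fixed plane is chosen so as to swap the two arcs $s_1, s_2$ of $T_1$, and so that $\sigma(B_2)=B_2$ with $\sigma$ interchanging $p_1\leftrightarrow p_2$. Such a $\sigma$ is available because $T_1$ is built to be invariant under the swap of its two strands, and because $p_1,p_2$ are parallel copies of one arc, with $J$ inserted symmetrically along the common disk $D$. Under $\sigma$, the knot $K(n;J)$ is symmetric, and in fact admits a symmetric union presentation: the "half" of $K(n;J)$ on one side of the axis of $\sigma$ is an auxiliary knot $L=L(n;J)$, and the other half is its mirror $L^*$, fused along two bands placed at the fixed arc-set of $\sigma$.

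Next, I would realize the ribbon disk concretely. When $T_1$ is closed up trivially (ignoring $T_2$), its two arcs $s_1,s_2$ bound disjoint embedded disks $\Delta_1,\Delta_2\subset B_1$. To recover the true closure through $T_2(n;J)$, I attach two bands embedded in $B_2$, each carrying the $n$-twist and the $J$-pattern along $D$; since $p_1,p_2$ lie on opposite sides of the symmetry plane, these two attaching bands are $\sigma$-images of each other. The surface $\Delta_1\cup\Delta_2$ together with the two bands is then an immersed disk bounded by $K(n;J)$, and each band meets $\Delta_1\cup\Delta_2$ in exactly one transverse arc coming from one of the two crossings of $T_1$. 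Equivalently, this is the standard ribbon disk one obtains for a symmetric union via folding along $\sigma$.

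The main obstacle will be verifying that the two intersections really are ribbon singularities (i.e.\ arcs whose interiors lie in the bands, with endpoints in the capping disks) rather than clasps, and that no extra intersections arise as one threads $J$ through the bands. This reduces to a purely local check at each of the two crossings of $T_1$, independent of $n$ and $J$: since $J$ and the $n$-twist live entirely inside the bands, they cannot create new intersections with $\Delta_1\cup\Delta_2$, and the local model at each $T_1$-crossing shows that the band punctures the opposite disk in the ribbon, not clasp, pattern. With this verification in hand, the surface described above is a ribbon disk for $K(n;J)$ with precisely two ribbon singularities, which is the claim of the lemma.
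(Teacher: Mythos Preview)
Your proposal has a genuine gap in the construction of the ribbon disk itself, and the symmetric-union scaffolding does not repair it.

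You assert that $s_1,s_2$ bound \emph{disjoint} embedded disks $\Delta_1,\Delta_2\subset B_1$. An arc alone does not bound a disk; each $s_i$ must be completed by an arc on $\partial B_1$, and the only natural choice compatible with the gluing to $T_2$ is the arc $a_i$. The resulting disks $D_i$ with $\partial D_i=s_i\cup a_i$ are \emph{not} disjoint---they meet in two arcs, and this is exactly where the two ribbon singularities live. You then try to place the singularities instead at intersections of ``two bands in $B_2$'' with $\Delta_1\cup\Delta_2$, ``coming from the two crossings of $T_1$''. But anything embedded in $B_2$ meets anything embedded in $B_1$ only along $\partial B_1=\partial B_2$, so no transverse ribbon arc can arise that way; and the square tangle has six crossings, not two. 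The reflection $\sigma$ and the symmetric-union language are asserted rather than verified (for arbitrary $n$ and $J$ the required invariance is not obvious), and in any case they are never actually used in your disk construction.

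The paper's argument avoids all of this by exploiting one feature you overlooked: the tangle $T_2(n;J)$ is, by its very definition, two parallel arcs, so $p_1\cup a_1\cup p_2\cup a_2$ bounds a single \emph{embedded} disk $D\subset B_2$, regardless of $n$ and $J$. One then takes the two (intersecting) disks $D_1,D_2\subset B_1$ with $\partial D_i=s_i\cup a_i$ and glues $D_1\cup D\cup D_2$ along $a_1,a_2$ to obtain an immersed disk $E$ with $\partial E=K$. Since $D$ is embedded and disjoint from the interiors of $D_1,D_2$, the only self-intersections of $E$ are the two arcs $D_1\cap D_2$, which one checks directly are of ribbon type. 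No symmetry is invoked; the whole point is that $T_2$ contributes an embedded piece, so the singular behavior is confined to the square tangle $T_1$.
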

\begin{proof}
Each arc $s_i$ cobounds with $a_i$ a disk $D_i$ in $B_1$, $i=1, 2$. In general position, $D_1$ and $D_2$ intersect each other in two arcs, $\alpha_1$ and $\alpha_2$, as illustrated schematically in Figure \ref{figure: disks in tangle}.

\begin{figure}[htbp]
	
	\labellist
	\small \hair 2pt
	
	\scriptsize
	\pinlabel $\alpha_1$ at 55 60 
	\pinlabel $a_1$ at 9 56
	\pinlabel $D_1$ at 95 30
	
	\pinlabel $\alpha_2$ at 85 60 
	\pinlabel $a_2$ at 145 56
	\pinlabel $D_2$ at 55 30
	
	\endlabellist
	
	\centering
	\includegraphics[width=0.3\textwidth]{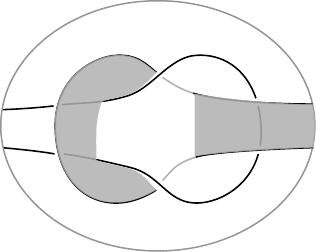}
	
	\caption{: Square tangle and transverse disks co-bounded by the arcs.}
	\label{figure: disks in tangle}
\end{figure}

Let $E$ be the immersed disk obtained by connecting $D_1$, $D$ and $D_2$ along $a_1$ and $a_2$. The immersed disk $E$ has boundary $K$ and self-intersects in two disjoint arcs which are ribbon singularities in $E$, that is the preimage of each of these arcs is two disjoint arcs in the preimage of $E$. Hence, as there are no other singularities, $E$ is a ribbon disk of $K$ with two ribbon singularities. 
\end{proof}

\begin{lem}\label{lemma:genus}
	The square double knot $K=K(2m+1; J)$, with $m$ an integer, has genus one.
\end{lem}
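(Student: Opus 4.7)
The plan is to prove Lemma \ref{lemma:genus} by constructing an explicit Seifert surface of genus one for $K = K(2m+1; J)$ and then showing that the genus cannot be lower.

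First, I would refine the construction from Lemma \ref{lemma:ribbon}. The immersed ribbon disk $E = D_1 \cup D \cup D_2$ there has two ribbon singularities along the arcs $\alpha_1, \alpha_2 \subset D_1 \cap D_2$. The key observation is that the odd parity of $n = 2m+1$ allows these two singularities to be merged: the $(2m+1)$ twists in $T_2$ provide a half-twist that lets me isotope the disks $D_1$ and $D_2$ so that, after reconfiguration, the immersed disk has only one ribbon singularity; in other words, $K$ is a fusion-one ribbon knot. Resolving this single singularity by tubing then yields an embedded orientable Seifert surface of genus one for $K$, concretely a once-punctured torus. Alternatively, one may think of this surface as the standard genus-one surface of a twisted Whitehead-type double pattern, transported along the companion $J$.

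Second, for the lower bound $g(K) \geq 1$, I would verify that $K$ is nontrivial. If $J$ is nontrivial, then the pattern ball in $T_2(2m+1; J)$ is knotted along $J$, so $K$ is a nontrivial satellite of $J$ and hence nontrivial. If $J$ is the unknot, $K$ is the closure of the square tangle with the $(2m+1)/1$ rational tangle, whose nontriviality can be confirmed by a direct invariant computation (for instance, the Alexander polynomial of the candidate genus-one Seifert surface, or the determinant of $K$).

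The main obstacle will be the first step: showing the two ribbon singularities can be merged into a single one using the odd parity of $n$. For even $n$ the analogous construction does not reduce the count --- consistent with $K(0; \mathrm{unknot})$ being the square knot, which has genus two --- so the parity condition is essential and must feature explicitly in the argument. Making this merge precise requires a careful tracking of how the $(2m+1)$-twist in $T_2$ interacts with the arcs $\alpha_1, \alpha_2$ and how the disk $D$ (following the pattern of $J$) frames the resulting band, so that the produced surface is verifiably embedded, orientable, and of genus exactly one.
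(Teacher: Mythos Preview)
Your plan hinges on a claim that is neither justified nor obviously true: that the odd twist in $T_2$ lets you isotope $D_1$ and $D_2$ so that the two ribbon singularities $\alpha_1,\alpha_2$ coalesce into a single one. Note that both singularities live entirely in the ball $B_1$ (the square tangle), while the $(2m+1)$ twists sit in $B_2$; you have not explained any mechanism by which the half-twist in $B_2$ reaches into $B_1$ to merge $\alpha_1$ with $\alpha_2$. Your fallback description (``the standard genus-one surface of a twisted Whitehead-type double pattern'') is also off: the square-double pattern is not a Whitehead pattern, so you cannot simply import that surface. As you yourself flag, this merging step is the crux, and at present it is a gap rather than an obstacle you know how to overcome.

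The paper avoids this issue by taking a different route that you should compare with. Rather than trying to reduce to one ribbon singularity, it resolves \emph{both} singularities simultaneously with a \emph{single} tube. Concretely: cut $D_1$ open along $\alpha_2$ and $D_2$ open along $\alpha_1$ to obtain annuli $O_1,O_2$; then $O_1\cup D\cup O_2$ is an embedded twice-punctured disk $P$ with $\partial P$ consisting of $K$ together with two circles. Now attach one swallow-follow annulus $A$ (running along the subdisk of $E$ between $\alpha_1$ and $\alpha_2$, through $B_2$) joining those two circles. The parity enters exactly here, and transparently: because $n=2m+1$ is odd, the two boundary circles of $A$ lie on the \emph{same} side of $E$, so $P\cup A$ is an embedded orientable once-punctured torus bounded by $K$. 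For even $n$ the two circles would lie on opposite sides and the glued surface would be non-orientable, which is precisely why $K(0;\text{unknot})$ (the square knot) escapes the argument. The lower bound then follows as you said, from nontriviality of $K$. This construction achieves what your merging idea is aiming for---one handle instead of two---but does so by an explicit cut-and-reglue rather than by an unspecified isotopy of ribbon singularities.
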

\begin{proof}
	Let $O_1$ (resp., $O_2$) be the annulus obtained by removing an open regular neighborhood of $\alpha_2$ (resp., $\alpha_1$) from $D_1$ (resp., $D_2$), as illustrated in Figure \ref{figure: disks in tangle}. Hence, by connecting $O_1$, $D$ and $O_2$ along $a_1$ and $a_2$ we obtain an embedded twice punctured disk $P$. From the ribbon disk $E$, as in Lemma \ref{lemma:ribbon}, the arcs $\alpha_1$ and $\alpha_2$ also separate an embedded disk containing $D$. Let $A$ be a swallow-follow annulus along this disk from $\partial O_1 - \partial D_1$ to $\partial O_2 - \partial D_2$ (denoted $\partial A_1$ and $\partial A_2$, respectively). As $2m+1$ is odd, the boundary components of $A$ are in the same side of the disk $E$. Hence, connecting $O_1$, $D$, $O_2$ and $A$ we obtain an embedded torus $F$ with boundary $K$. As $K$ is knotted, there is no compressing disk for $F$, otherwise $K$ would bound a disk. Hence, $F$ is essential and a Seifert surface of $K$.
\end{proof}

\begin{lem}\label{lemma:prime}
A square double of a non-trivial knot is prime. The only square double of the unknot that is composite is the square knot.
\end{lem}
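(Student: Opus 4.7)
The plan is to combine the tangle decomposition $K(n;J)=T_1\cup_\Sigma T_2$ (with $\Sigma=\partial B_1=\partial B_2$) with the satellite structure when $J$ is non-trivial, and with genus information when $J$ is trivial, in order to rule out a hypothetical decomposing sphere meeting $K$ in two points.

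Suppose first that $J$ is non-trivial. The construction makes $K=K(n;J)$ a satellite knot with companion $J$: there is an essential torus $T\subset E(K)$ bounding a knotted solid torus $V\subset S^3$ whose core is isotopic to $J$ and which contains $K$ with non-trivial wrapping. Assume for contradiction that $K=K_1\#K_2$ with both $K_i$ non-trivial, and let $S$ be a decomposing sphere meeting $K$ in two points. Isotope $S$ to minimize $|S\cap T|$ and apply the standard innermost-disk argument to $S\cap E(K)$, a twice-punctured sphere, using essentiality of $T$. This reduces to the cases that $S\cap T$ is empty or consists of essential parallel meridians of $T$. If $S\cap T=\emptyset$ then $S\subset V$, since $S$ cannot lie in the companion exterior $S^3\setminus V$, but the pattern of $K$ inside $V$, being two twisted parallel arcs capped by the square tangle, admits no non-trivial $2$-string decomposition. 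The meridional case is eliminated by analyzing the annular and disk pieces of $S$ cut along $T$, using the explicit form of the pattern.

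For the second assertion, take $J$ the unknot. If $n=0$, the construction directly recovers the square knot $3_1\#\overline{3_1}$, which is composite. If $n\neq 0$ and $n$ is odd, Lemma~\ref{lemma:genus} gives $K$ genus one, and composite knots have genus at least two (genus is additive under connected sum), so $K(n;\text{unknot})$ is prime. If $n$ is even and non-zero, $K(n;\text{unknot})$ is the closure of the square tangle $T_1$ with the rational $n/1$ tangle; a standard innermost-circle analysis on $S\cap\Sigma$ for a hypothetical decomposing sphere $S$, combined with the facts that the square tangle has no local knot and no properly embedded disk in $B_1$ separating its two strands, and that a non-zero rational tangle is locally unknotted, shows that no such $S$ exists.

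The principal obstacle I anticipate is the meridional-intersection subcase in the non-trivial-$J$ argument: proving that essential meridians of $T$ on $S$ are incompatible with $S$ being a decomposing sphere of $K$. This requires careful control over how the square tangle pieces outside $V$ can glue with the annular and disk pieces of $S\cap V$, and depends crucially on the rigidity of the two twisted parallel arcs pattern inside the companion solid torus.
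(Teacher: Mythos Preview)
Your plan has genuine gaps in both halves, and the paper takes a different and more efficient route.

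For non-trivial $J$: the paper's primary argument invokes Lickorish's theorem that a knot admitting a decomposition into two prime $2$-string tangles is prime. One checks that $T_1$ (the square tangle) is prime --- this is already in \cite{Lickorish-81} --- and that $T_2$ is prime when $J$ is non-trivial (no local knots because the two arcs are parallel; essential because the pattern $J$ is non-trivial). That finishes the case in one stroke. The paper also sketches your satellite approach as an alternative, but the key observation you are missing is that the wrapping number of $K$ in the companion solid torus $V$ is exactly~$2$. With that in hand your ``meridional case'' evaporates: an innermost disk on $S$ meeting $K$ in $0$ or $1$ points, if it lies in $V$, is a meridian disk of $V$ hitting $K$ fewer than twice, contradicting wrapping number~$2$; if it lies outside $V$ it would compress $\partial V$ in the non-trivial knot exterior $S^3\setminus\mathrm{int}\,V$. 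No delicate analysis of annular pieces or of ``rigidity of the pattern'' is needed, and the case $S\cap T=\emptyset$ (which you also leave as an unproven assertion about the pattern) does not require separate treatment.

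For trivial $J$: your odd/even case split is not how the paper proceeds, and your even-$n$ argument is only a sketch. The claim that an innermost-circle analysis on $S\cap\Sigma$ suffices is not obvious here: $T_2$ is a rational tangle, hence \emph{not} essential, so primeness of $T_1$ alone does not exclude a decomposing sphere. The paper instead invokes a result of Eudave-Mu\~noz \cite{EN-88}: if the closures of $T_1$ with two rational tangles $r/s$ and $p/q$ are both composite, then $|ps-qr|\le 1$. Since $n=0$ gives the (composite) square knot, the only other candidates are $n=\pm 1$; these have genus one by Lemma~\ref{lemma:genus} and hence are prime by additivity of genus. Everything is thus reduced to a distance computation plus the genus lemma, with no ad hoc sphere analysis for even~$n$.
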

\begin{proof}
The first part of this theorem is a consequence of a result of Lickorish \cite{Lickorish-81} stating that a knot with a 2-string prime decomposition is a prime knot. We recall that a tangle is said prime if it is essential and there are no local knots, that is no ball intersects the strings of the tangle in a single non-trivial arc. As it is observed also in \cite{Lickorish-81} the tangle $T_1$ is a 2-string prime tangle. The tangle $T_2$ is defined by two parallel arcs with a pattern of a non-trivial knot. Hence,  there are no local knots in $T_2$,  otherwise the strings could not be parallel, and $T_2$ is an essential tangle, otherwise the pattern $J$ would be of the unknot.  Therefore, the square double of a non-trivial knot has a 2-string prime tangle decomposition. Hence, from \cite{Lickorish-81} it is prime. \\ One other way to prove this statement is by using the wrapping number. The wrapping number of a square double of a non-trivial knot is 2,  as it can be seen by taking the meridian of the companion solid torus that is a regular neighborhood of $a_i$.  If there was a decomposing sphere, by taking its intersection with the companion torus and an innermost curve argument on the intersecting curves on the sphere, we would obtain a compressing disk for the companion torus or a meridian disk for the companion solid torus intersecting $K$ once,  contradicting  the existence of a meridian disk intersecting $K$ at two points.\\
For the second part of the theorem note first that if the pattern of the parallel arcs in $T_2$ is the unknot then the tangle is an integral rational tangle. We know that if $T_2$ is the 0 rational tangle we obtain the square knot, which is composite. From Eudave-Muñoz work \cite{EN-88}, if $r/s$ and $p/q$ are two rational tangles whose closure of $T_1$ return a composite knot then $|ps-qr|\leq 1$. Then, the only other possibilities for $T_2$ when closing $T_1$ to return a composite knot is if $T_2$ is a $1$ or $-1$ rational tangle.  However, these knots have genus 1, as stated in Lemma \ref{lemma:genus} for the knots $K(2m+1, J)$, where in this case $2m+1=\pm 1$ and $J$ is the unknot. As genus is additive under connected sum it cannot be 1 if the knot is composite. Therefore, these knots are also prime. Hence, the only square double of the unknot that is not prime is the square knot. 
\end{proof}

\section{Longitudinal branched surface in a square double exterior}\label{section: branched surface}

For the proof of Theorem \ref{main}, we use branched surface theory based on work of Oertel in \cite{Oertel-84} and of Floyd and Oertel in \cite{Floyd-Oertel}.  First, we revise the definition of a branched surface and a surface carried by a branched surface in the following paragraphs.\\

A \textit{branched surface} $R$ with generic branched locus is a compact space locally modeled as shown in Figure \ref{branchedmodel}(a). Hence, a union of finitely many compact smooth surfaces in a $3$-manifold $M$, glued together to form a compact subspace of $M$ respecting the local model, is a branched surface. We denote by $N=N(R)$ a fibered regular neighborhood of $R$ (embedded) in $M$, locally modeled as shown in Figure \ref{branchedmodel}(b). The boundary of $N$ is the union of three compact surfaces $\partial_h N$, $\partial_v N$ and $\partial M\cap \partial N$, where a fiber of $N$ meets $\partial_h N$ transversely at its endpoints and either is disjoint from $\partial_v N$ or meets $\partial_v N$ in a closed interval in its interior. We say that a surface $S$ is \textit{carried} by $R$ if it can be isotoped into $N$ so that it is transverse to the fibers. Furthermore, $S$ is carried by $R$ with \textit{positive weights} if $S$ intersects every fiber of $N$. If we associate a weight $w_i\geq 0$ to each component in the complement of the branch locus in $R$ we say that we have an $\textit{invariant measure}$ provided that the weights satisfy the \textit{branch equations} as in Figure \ref{branchedmodel}(c). Given an invariant measure on $R$, we can define a surface carried by $R$ with respect to the number of intersections between the fibers and the surface. We also note that if all weights are positive then the surface carried can be isotoped to be transverse to all fibers of $N$, and hence is carried with positive weights by $R$.

\begin{figure}[htbp]
	
	\labellist
	\small \hair 0pt
	\pinlabel (a) at 3 -5

	\pinlabel (b) at 173 -5

	\pinlabel (c) at 335 -5
	
	\pinlabel  $\partial \text{ }N$ at 207 23
	\pinlabel \tiny $h$ at 205 21

	\pinlabel  $\partial \text{ }N$ at 155 40
	\pinlabel \tiny $v$ at 153 37

	\pinlabel $w_3=w_2+w_1$ at 383 50
	\pinlabel $w_1$ at 407 14
	\pinlabel $w_2$ at 391 30
	\pinlabel $w_3$ at 368 30

	\endlabellist
	\centering
	\includegraphics[width=0.9\textwidth]{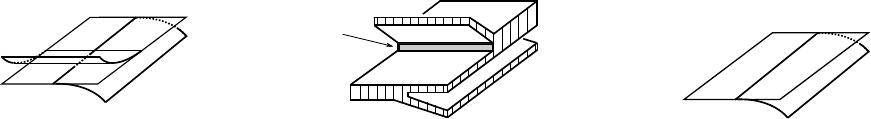}
	\caption{: Local model for a branched surface, in (a),  its regular neighborhood, in (b), and weights with branch equations, in (c).}
	\label{branchedmodel}
\end{figure}

In this section, we will construct a branched surface in square double exteriors of composite knots, which we use on the proof of Theorem \ref{main}.\\

Let $K=K(2m+1; J)$, with $m$ an integer,  be a square double of $J$,  a composite knot with summands $J_1$ and $J_2$, where we assume $J_1$ is as described in \cite{N-15}.  That is, $J_1$ has in its exterior meridional essential surfaces of any genus with two boundary components, which we denote $X_g$ for genus $g-1$.  Denote by $Q_i$ the ball intersecting $J$ in a single arc with pattern $J_i$. We assume,  after an ambient isotopy if needed, that $Q_i$ is in $B_2$,  and the intersection of $Q_i$ with $p_1\cup p_2$ is two parallel arcs with pattern $J_i$.  Let $S_i$ be the boundary sphere of $Q_i$. Let $A$ be the annulus as defined in the proof of Lemma \ref{lemma:genus}. The sphere $S_i$ intersects $A$ at two circles. These two circles cobound an annulus $U_i$ in $A$, and an annulus $V_i$ in $S_i$.  With $U_1$ as the boundary of a tubular neighborhood of an arc with pattern $J_1$, we can assume, after an isotopy if needed, that the boundary components of $X_g$ are also the two boundary circles of $U_1$.\\

 Consider the boundary components of $A$, $\partial_i A$, $i=1, 2$, as denoted in the proof of Lemma \ref{lemma:genus}. Without loss of generality, we assume that a path in $A$ from $\partial_1 A$ must first intersect $S_1$ before $S_2$ and $S_2$ before $\partial_2 A$. Let $A_1$ (resp., $A_2$) be the annulus in $A$ between $\partial_1 A$ and $U_1$ (resp., $\partial_2 A$ and $U_2$). Finally, we denote by $A'$ the annulus in $A$ from $U_1$ and $U_2$. (See Figure \ref{figure:annulus A}.)

\begin{figure}[htbp]
	
	\labellist
	\small \hair 2pt
	
	\scriptsize
	
	\pinlabel $\partial_1A$ at 10 30
	\pinlabel $A_1$ at 50 60 
	\pinlabel $U_1$ at 120 60
	\pinlabel $A'$ at 190 60
	\pinlabel $U_2$ at 270 60
	\pinlabel $A_2$ at 340 60
	\pinlabel $\partial_2A$ at 380 30
	
	\pinlabel $S_1$ at 120 100
	\pinlabel $S_2$ at 270 100

	\endlabellist
	
	\centering
	\includegraphics[width=0.45\textwidth]{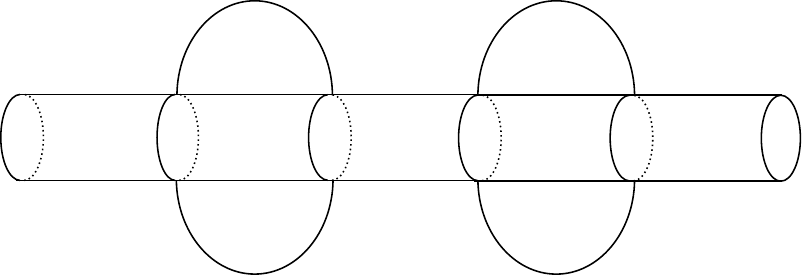}
	
	\caption{: Partition of the annulus $A$ with respect to the intersection with $S_1$ and $S_2$.}
	\label{figure:annulus A}
\end{figure}

Let $P$ be the twice punctured disk as defined in the proof of Lemma \ref{lemma:genus}. 
 Consider the union of $P$, $A_1$, $U_1$,  $X_g$,  $V_1$,  $A'$, $U_2$, $V_2$ and $A_2$ denoted by $R_g$. We smooth the space $R_g$ on the intersection of these union parts as follows: At $\partial_i A$, $i=1, 2$, there is no singularity in $R_g$; hence it is smoothed with $P$. The annulus $U_i$ at $A_i\cap U_i$, $i=1,2$ respectively,  is smoothed towards $A_i$. The annulus $V_i$ at $V_i\cap A_i$ is smoothed towards $A_i$, and at $V_i\cap U_i$ is smoothed towards $U_i$, $i=1,2$. The surface $X_g$ at $X_g\cap A_1$ is smoothed towards $A_1$,  and at $X_g\cap A'$ is smoothed towards $A'$.  The annulus $A'$ at  $A'\cap U_1$  is smoothed towards $U_1$ (and $X_g$) and at $A'\cap U_2$ is smoothed towards $V_2$ (similarly to $U_2$). The Figure \ref{figure:branched surface} provides an illustration of $R_g$.

\begin{figure}[htbp]
	
	\labellist
	\small \hair 2pt
	
	\scriptsize
	\pinlabel $P$ at 80 70
	\pinlabel $A_1$ at 100 49
	
	\pinlabel $V_1$ at 148 88
	\pinlabel $X_g$ at 148 73
	\pinlabel $U_1$ at 148 48
	
	\pinlabel $A'$ at 188 48
	
	\pinlabel $V_2$ at 228 81
	\pinlabel $U_2$ at 228 48
	
	\pinlabel $A_2$ at 270 49
	\pinlabel $P$ at 295 35
	
	\pinlabel $P$ at 148 155
	\pinlabel $P$ at 148 124

	\endlabellist
	
	\centering
	\includegraphics[width=1\textwidth]{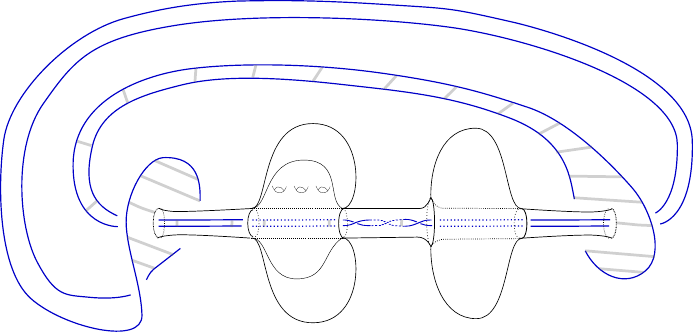}
	
	\caption{: An illustration of the branched surface $R_g$.}
	\label{figure:branched surface}
\end{figure}

Note that some sections of $R_g$ branch into three sections,  as $A_1$ with $V_1,  X_g,   U_1$,  or two branches into two branches,  as $X_g,  U_1$ and $V_1,  A'$,  as illustrated in Figure \ref{figure:branched surface},  which is not in conformity with the local model of branched surface.  However,  a small isotopy corrects this for the purpose of the local model. For convenience on the number of sections involved, we continue with the current choice of sections up to a small isotopy correction. So, from this construction and up to a small isotopy, the space $R_g$ is a branched surface with sections denoted by $P$, $A_1$, $U_1$, $X_g$,  $V_1$,  $A'$, $U_2$, $V_2$ and $A_2$.

\section{Longitudinal essential surfaces from a branched surface}\label{section: surfaces}

In this section we prove Theorem \ref{main}.  We will do so by proving that the surfaces carried by $R_g$ are essential. \\

We recall that the components on the complement of the branched locus of $R_g$ are $P$, $A_1$,  $U_1$, $X_g$, $V_1$,  $A'$, $U_2$, $V_2$ and $A_2$.  We denote the weights of each component on an invariant measure for $R_g$ as $W_P$,  $W_{A_1}$, $W_{U_1}$,  $W_{X_g}$,  $W_{V_1}$,  $W_{A'}$,  $W_{U_2}$,  $W_{V_2}$ and $W_{A_2}$, respectively.\\

Now we define surfaces $F_g^n$ carried by $R_g$ with genus $g$ and any number $n$ of longitudinal boundary components.  We define $F_g^1$ as the surface carried by $R_g$ with invariant measure $W_P=1$, $W_{A_1}=1$,  $W_{U_1}=0$, $W_{X_g}=1$,  $W_{V_1}=0$, $W_{A'}=1$, $W_{U_2}=1$, $W_{V_2}=0$ and $W_{A_2}=1$.  Let us define $F_g^2$ as the surface carried by $R_g$ with invariant measure $W_P=2$, $W_{A_1}=2$, $W_{U_1}=0$, $W_{X_g}=1$,  $W_{V_1}=1$, $W_{A'}=0$, $W_{U_2}=1$, $W_{V_2}=1$ and $W_{A_2}=2$.  For $n\geq 3$, we define $F_g^n$ as the surface carried by $R_g$ with invariant measure $W_P=n$, $W_{A_1}=n$, $W_{U_1}=n-2$, $W_{X_g}=1$,  $W_{V_1}=1$, $W_{A'}=n-2$, $W_{U_2}=1$, $W_{V_2}=n-1$ and $W_{A_2}=n$.  In Figure \ref{figure:schematic} we have a schematic illustration of the surfaces $F_g^n$ for $n\geq 3$.\\

\begin{lem}\label{lemma:orientable}
The surface $F_g^n$ is orientable and connected.
\end{lem}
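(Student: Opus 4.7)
The plan is to prove orientability by equipping $R_g$ with a global transverse orientation, and to prove connectedness by case-analysis of how the weighted sheets glue along the branch locus.

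For orientability, each piece $P, A_i, U_i, V_i, A', X_g$ of $R_g$ is a compact orientable surface embedded in the orientable $3$-manifold $M = S^3 \setminus \mathrm{int}\,N(K)$, so each carries a transverse co-orientation. I would verify that at each branch circle $c_1, c_2, c_3, c_4$ (the boundary circles of the $U_i$) and each smooth seam $\partial_1 A, \partial_2 A$, the smoothings prescribed in the construction of $R_g$ are compatible with these co-orientations: for instance, at $c_1$ the sheets $V_1, X_g, U_1$ are all smoothed toward the side of $A_1$, so their transverse orientations agree with that of $A_1$ pointing across the branch; analogous local checks handle $c_2, c_3, c_4$. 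This yields a global co-orientation of $R_g$, which pulls back to a transverse orientation on any carried surface $F_g^n \subset N(R_g)$. Two-sidedness in the orientable $M$ then gives orientability.

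For connectedness, I would model $F_g^n$ by its sheet graph $G_n$, whose vertices are indexed copies of pieces ($W_\bullet$ copies of each) and whose edges come from the smooth identifications at each branch locus. Connectedness of $F_g^n$ reduces to connectedness of $G_n$. For $n=1$, every non-zero weight equals $1$ and $G_1$ is the explicit cycle $P \to A_1 \to X_g \to A' \to U_2 \to A_2 \to P$, which is visibly connected. For $n=2$, the unique copies of $X_g$ and $V_1$ bridge the two copies of $A_1$ through the branch relations at $c_1, c_2$, the unique copy of $U_2$ together with $V_2$ bridges the two copies of $A_2$ through $c_3, c_4$, and the two $P$-copies cross-connect the $A_1$- and $A_2$-sides. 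For $n \geq 3$ the same mechanism works: the weight-$1$ pieces $X_g, V_1, U_2$ serve as bridges forcing the many copies of $U_1, A', V_2$ (and of $P, A_1, A_2$) into a single component.

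The main obstacle will be the $4$-sheet branching at $c_2$, where the relation $W_{X_g}+W_{U_1}=W_{V_1}+W_{A'}$ admits in principle several compatible sheet-pairings, and in fact carries the bulk of the combinatorial content of the connectedness claim. I would use the natural pairing dictated by the local geometry of the smoothing -- pairing the unique $X_g$-sheet with a $V_1$-sheet and matching the $U_1$-copies with the $A'$-copies in cyclic order -- and verify that, together with the $3$-sheet identifications at $c_1, c_3, c_4$ and the trivial identifications at $\partial_1 A, \partial_2 A$, the graph $G_n$ is connected for every $n \geq 1$. A sanity check is the Euler-characteristic computation $\chi(F_g^n) = -nW_{\!P} \cdot 1 + W_{X_g}(2-2g) = 2-2g-n$, which matches the claimed topological type of genus $g$ with $n$ boundary components once connectedness is established.
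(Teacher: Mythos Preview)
Your strategy is sound and arrives at the same conclusion, but it is organized differently from the paper's proof. The paper does \emph{not} co-orient $R_g$; instead it labels the two sides of each copy $P_i$ of the pair-of-pants $P$ by $+$ and $-$, and then traces explicitly (with the aid of a schematic figure) how the annular and $X_g$ pieces link the $P_i$'s: for $n\geq 3$ the odd-indexed $P_i$ form one chain, the even-indexed another, and the weight-$1$ sectors tie the two chains together into a single cycle in which every junction is $+$-to-$-$. Thus connectedness and orientability are established simultaneously by one combinatorial trace.

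Your co-orientation argument is more structural and, once verified, handles all carried surfaces at once---a genuine advantage. One point to sharpen: compatibility ``at each branch circle'' is not really the issue, since at a branch curve the incident sectors are tangent and any local co-orientation extends automatically. The actual obstruction is global: each of $V_1$, $V_2$, $X_g$ is attached along \emph{two} circles of $P\cup A$, and you must check that the co-orientation it inherits at one attaching circle agrees with the one it inherits at the other. Geometrically this amounts to the observation that each of these pieces lies entirely on a single side of the Seifert surface $P\cup A$ (they sit inside the solid tori bounded by the swallow--follow tori $U_i\cup V_i$), which you should state explicitly.

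For connectedness your sheet-graph formalism is the paper's argument in different clothing; your identification of the $2$-into-$2$ branch at $c_2$ as the delicate point is exactly right, and the ``natural pairing'' you propose ($X_g\leftrightarrow A'$, $U_1\leftrightarrow V_1$) is the one the construction dictates. The Euler-characteristic bookkeeping at the end is correct in substance, though your expression $-nW_P\cdot 1$ should read $W_P\cdot\chi(P)=n\cdot(-1)$.
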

\begin{proof} For convenience of notation, without loss of generality, we consider one side of $P$ to be the positive ``$+$'' side and the other side to be the negative ``$-$'' side.
	If $n=1$, we connect the non-peripheral boundary components of $P$ respecting the orientation of $P$.  If $n=2$ we connect the non-peripheral boundary components of $P_1$ to the ones of $P_2$ always from the $+$ or $-$ side of $P_1$ to the corresponding $+$ or $-$ side of $P_2$. Hence, we obtain an orientable connected surface. \\  
	Suppose now that $n\geq 3$. As it can be observed from the schematic representation of $F_g^n$ in Figure \ref{figure:schematic},  all the odd indexed $P_i$'s are connected in linear order, and similarly the even indexed are connected in linear order.  Here, we respect the positive and negative sides of the $P_i$'s. The positive side of $P_1$ is connected to the negative side of $P_2$, and the positive side of $P_{n-1}$ is connected to the negative side of $P_n$.  So, we connect the odd indexed to the even indexed sequences of $P_i$'s to opposite sides at the beginning and at the end of the odd and even sequences, forming a loop. Hence,  $F_g^n$ is connected and orientable. 
\end{proof}

\begin{figure}[htbp]
	
	\labellist
	\small \hair 2pt
	
	\scriptsize
	\pinlabel $P_1$ at 33 99
	\pinlabel $P_1$ at 280 112

	\pinlabel $P_2$ at 33 84
	\pinlabel $P_2$ at 330 110	
	\pinlabel $P_3$ at 33 69
	\pinlabel $P_3$ at 253 90
	\pinlabel $P_4$ at 357 90

	\pinlabel $P_n$ at 35 3
	\pinlabel $P_n$ at 277 -2

	\pinlabel $P_{n-1}$ at 41 18	
	\pinlabel $P_{n-1}$ at 338 -2	
	
	\pinlabel $P_{n-2}$ at 41 33
	\pinlabel $P_{n-2}$ at 252 12

	\pinlabel $P_{n-3}$ at 358 12

	\pinlabel $+$ at 60 106	
	\pinlabel $-$ at 60 100
	\pinlabel $+$ at 152 106	
	\pinlabel $-$ at 152 100

	\pinlabel $+$ at 152 91	
	\pinlabel $-$ at 152 85	
	\pinlabel $+$ at 60 91	
	\pinlabel $-$ at 60 85

	\pinlabel $+$ at 60 77	
	\pinlabel $-$ at 60 70	
	\pinlabel $+$ at 152 77	
	\pinlabel $-$ at 152 70

	\pinlabel $+$ at 60 24	
	\pinlabel $-$ at 60 17		
	\pinlabel $+$ at 152 24	
	\pinlabel $-$ at 152 17		

	\pinlabel $+$ at 60 39	
	\pinlabel $-$ at 60 32	
	\pinlabel $+$ at 152 39	
	\pinlabel $-$ at 152 32

	\pinlabel $+$ at 60 9	
	\pinlabel $-$ at 60 2
	\pinlabel $+$ at 152 9	
	\pinlabel $-$ at 152 2

	\endlabellist
	
	\centering
	\includegraphics[width=0.85\textwidth]{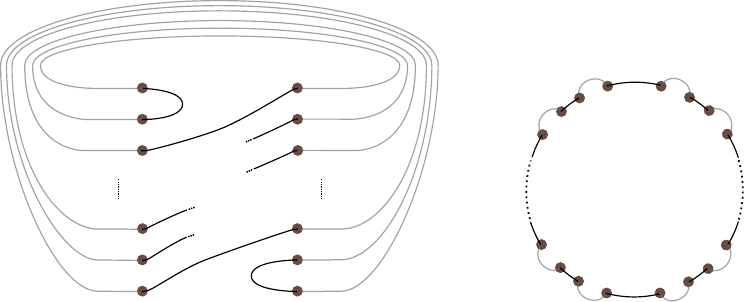}
	
	\caption{: On the left, there is a schematic representation of $F_g^n$,  $n\geq 3$,  on how the components corresponding to sections of $R_g$ connect.  On the right, there is a schematic representation on how the copies of $P$ are connected in $F_g^n$,  for $n\geq 3$ odd.}
	\label{figure:schematic}
\end{figure}

\begin{lem}\label{lemma:slope}
	The surface $F_g^n$ has longitudinal slope.
\end{lem}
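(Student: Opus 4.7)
The plan is to go through each of the nine sheets of $R_g$ --- namely $P$, $A_1$, $U_1$, $X_g$, $V_1$, $A'$, $U_2$, $V_2$, $A_2$ --- and verify that only the twice-punctured disk $P$ has any boundary on $\partial N(K)$. Granted this, the invariant measure for $F_g^n$ assigns $W_P=n$ to $P$, so $\partial F_g^n$ consists of $n$ parallel copies of $\partial P\cap\partial N(K)$, which will be identified with the longitude of $K$.

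The starting point is Lemma \ref{lemma:genus}: the torus $F = P\cup A$ is a Seifert surface of $K$, with $\partial F = K$. The boundary of the swallow-follow annulus $A$ consists of the two circles $\partial_1 A,\partial_2 A$, which are by construction the boundaries of the regular neighborhoods of $\alpha_2,\alpha_1$ removed from $D_1,D_2$ to form the annuli $O_1,O_2$. These circles sit in the interiors of $D_1,D_2$ and hence in the interior of the knot exterior; they do not touch $\partial N(K)$. Consequently the entire boundary $\partial F=K$ is supplied by $P$, so $\partial P\cap\partial N(K)$ is a single longitudinal curve of $K$.

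Next I would verify that every other sheet has all of its boundary in the interior of the knot exterior. The sub-annuli $A_1, U_1, A', U_2, A_2$ of $A$ have their boundary components either on the puncture circles $\partial_i A$ of $P$ or on the decomposing spheres $S_1,S_2$; the annuli $V_1, V_2$ sit in $S_1, S_2$ with boundary there; and by the arrangement recalled in Section \ref{section: branched surface}, $\partial X_g$ is isotoped to coincide with $\partial U_1\subset S_1$. All of these loops are branch curves of $R_g$ along which the sheets of $R_g$ are glued together, and none of them reaches $\partial N(K)$.

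Putting these observations together, $\partial F_g^n\cap\partial N(K)$ consists precisely of $W_P=n$ parallel copies of the longitude $\partial P\cap\partial N(K)$, and therefore $F_g^n$ has longitudinal slope. The only slightly delicate point is confirming that no sheet other than $P$ sneaks a boundary onto $\partial N(K)$; this reduces to matching the explicit description of each branch curve of $R_g$ against the construction of $A$, the spheres $S_i$, and the isotopy placing $\partial X_g$ onto $\partial U_1$.
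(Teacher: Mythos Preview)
Your proposal is correct and follows essentially the same approach as the paper's proof: both observe that the boundary of $F_g^n$ comes entirely from the copies of $P$, and that since $P$ is a region of the Seifert surface $F$ of Lemma~\ref{lemma:genus}, its boundary on $\partial E(K)$ is longitudinal. The paper's proof simply asserts in one line that each boundary component of $F_g^n$ is a boundary component of a copy of $P$, whereas you spell out the sheet-by-sheet verification that no other section of $R_g$ meets $\partial N(K)$; this is a harmless expansion of the same argument.
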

\begin{proof}
Each boundary component of $F_g^n$ is the boundary component of a copy of $P$ in $\partial E(K)$. We recall that the section $P$ of $R_g$ is a region of the Seifert surface $F$ as in the proof of Lemma \ref{lemma:genus}. As a Seifert Surface has longitudinal slope, the boundary slope of $F_g^n$ is longitudinal.
\end{proof}

\begin{lem}\label{lemma:genus and boundary}
	The surface $F_g^n$ has genus $g$ and $n$ boundary components.
\end{lem}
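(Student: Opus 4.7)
The plan is to establish the two assertions---that $F_g^n$ has $n$ boundary components and that its genus is $g$---separately: the boundary count by direct inspection of which sections of $R_g$ meet $\partial E(K)$, and the genus by pinning it down through an Euler characteristic computation combined with the connectedness and orientability already established in Lemma \ref{lemma:orientable}.

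I would begin with the boundary count. Since $P$ is a region of the Seifert surface $F$ from Lemma \ref{lemma:genus}, it carries a peripheral boundary on $\partial E(K)$, namely a longitude of $K$; the other two components of $\partial P$ are the interior circles $\partial_1 A$ and $\partial_2 A$, which are smoothed (not branched) together with $A_1$ and $A_2$ inside $E(K)$. None of the remaining sections meets $\partial E(K)$: the pieces $A_1, U_1, A', U_2, A_2$ lie on the swallow--follow annulus $A$, the annuli $V_1, V_2$ lie on the decomposing spheres $S_1, S_2$, and $X_g$ has its boundary on $\partial U_1 \subset A$. Consequently $\partial F_g^n$ consists of $W_P = n$ parallel longitudinal circles, one for each copy of $P$ in the carried surface.

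For the genus, I would compute $\chi(F_g^n)$ via additivity over the sections of $R_g$. Because the branch locus (after the small isotopy correction) is a disjoint union of circles, each of zero Euler characteristic, the standard identity
\[
\chi(F_g^n) = \sum_\sigma W_\sigma \, \chi(\sigma)
\]
holds, the sum being over sections of $R_g$. Among these, $A_1, U_1, V_1, A', U_2, V_2, A_2$ are annuli and contribute $0$; $P$ is a twice-punctured disk with $\chi(P) = -1$; and $X_g$, of genus $g-1$ with two boundary components, has $\chi(X_g) = 2 - 2g$. In each of the three weight regimes ($n = 1$, $n = 2$, $n \geq 3$) the weights satisfy $W_P = n$ and $W_{X_g} = 1$, so $\chi(F_g^n) = -n + (2-2g) = 2 - 2g - n$.

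Finally, since $F_g^n$ is connected and orientable by Lemma \ref{lemma:orientable}, if $h$ denotes its genus one has $\chi(F_g^n) = 2 - 2h - n$; comparing with the previous computation gives $h = g$. The only non-cosmetic subtlety is the justification of additivity of $\chi$ over the sections, but this reduces immediately to the observation that the branch locus is a union of circles and contributes nothing to the Euler characteristic; everything else is bookkeeping on the prescribed weights.
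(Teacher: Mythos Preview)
Your proof is correct and follows essentially the same approach as the paper's own proof: count the $n$ boundary components as coming from the $n$ copies of $P$ (the only section meeting $\partial E(K)$), compute $\chi(F_g^n)=n\chi(P)+\chi(X_g)=2-2g-n$ by additivity since all other sections are annuli, and then deduce the genus from connectedness and orientability via Lemma~\ref{lemma:orientable}. Your write-up is slightly more explicit about why the non-$P$ sections do not touch the boundary and about the additivity of $\chi$ over the circle branch locus, but the argument is the same.
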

\begin{proof}
	From the construction of $F_g^n$,   its boundary components come from the longitudinal boundary component of each copy of $P$.  As for $F_g^n$ we use $n$ copies of $P$, and all the other components have their boundaries connected to each other, we have that $F_g^n$ has $n$ boundary components.  In the definition of $F_g^n$,  the components other than copies of $P$ and $X_g$ are always annuli.  The Euler characteristic, $\chi$, of an annulus is $0$ and $\chi$ is additive under gluing components along their boundaries. Hence,  the Euler characteristic of $F_g^n$ is $n\times \chi(P)+\chi(X_g)$, which is $-n+(2-2(g-1)-2)= 2-2g-n$.  As $F_g^n$ is orientable and connected (Lemma \ref{lemma:orientable}) with $n$ boundary components,  it comes from the Euler characteristics formula of a $n$-punctured orientable connected surface that it has genus $g$,  completing the proof of the lemma. 
\end{proof}

The following concepts are relevant for the definition of incompressible branched surface. Let $R$ denote a branched surface in a 3-manifold $M$, with regular neighbood $N$ in $M$. A \textit{disc of contact} is a disc $O$ embedded in $N$ transverse to fibers and with $\partial O\subset \partial_v N$. A \textit{half-disc of contact} is a disc $O$ embedded in $N$ transverse to fibers with $\partial O$ being the union of an arc in $\partial M\cap \partial N$ and an arc in $\partial_v N$. A \textit{monogon} in the closure of $M-N$ is a disc $O$ with $O\cap N=\partial O$ which intersects $\partial_v N$ in a single fiber. (See Figure \ref{monogon}.) We say a branched surface $R$ in M contains a \textit{Reeb component} if $R$ carries a compressible torus or properly embedded annulus, transverse to the fibers of $N$, bounding a solid torus in $M$. (This is a weaker version of the definition of Reeb component in \cite{Oertel-84} by Oertel.)

\begin{figure}[htbp]
	
	\labellist
	\small \hair 0pt
	\pinlabel (a) at 3 -6
	
	\scriptsize
	\pinlabel monogon at 100 30
	
	\small
	\pinlabel (b) at 173 -6
	
	\scriptsize
	\pinlabel  \text{disk of} at 240 40
	
	\pinlabel \text{contact} at 240 34
	
	\endlabellist
	\centering
	\includegraphics[width=0.6\textwidth]{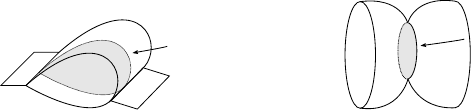}
	\caption{: Illustration of a monogon and a disk of contact on a branched surface.}
	\label{monogon}
\end{figure}

We recall that a branched surface $R$ embedded in a 3-manifold $M$ is said to be incompressible if it satisfies the following three properties:

\begin{itemize}
	\item[(i)] $R$ has no disk of contact or half-disks of contact;
	\item[(ii)] $\partial_h N$ is incompressible and  boundary incompressible in the closure of $M-N$, where a boundary compressing disk is assumed to have boundary defined by an arc in $\partial M$ and an arc in $\partial_h N$;
	\item[(iii)] There are no monogons in the closure of $M-N$. 
\end{itemize}

and it is said to be \textit{without Reeb components} if it satisfies the following property: 

\begin{itemize}
\item[(iv)] $R$ doesn't carry a Reeb component.
\end{itemize}

The following theorem, proved by Oertel in \cite{Oertel-84}, let us infer if a surface carried by a branched surface is essential. Note that condition (iv), $R$ not carrying a torus or an annulus cutting a solid torus from $M$, implies the non-existence of Reeb components in the sense of Oertel \cite{Oertel-84}.

\begin{thm}[Oertel, \cite{Oertel-84}]\label{Oertel}
If $R$ is an incompressible branched surface without Reeb components (i.e. satisfies (i)-(iv)) and $R$ carries some surface with positive weights then any surface carried by $R$ is essential.
\end{thm}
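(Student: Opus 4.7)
The plan is to argue by contradiction: if some surface $S$ carried by $R$ fails to be essential, then the corresponding compressing disc, $\partial$-compressing disc, or parallelism region must exhibit a configuration explicitly forbidden by hypotheses (i)--(iv). Throughout I would isotope $S$ into the fibered neighborhood $N=N(R)$ transverse to the fibers, and exploit the fact that the positively weighted surface $S_{0}$ saturates every sector of $R$, so that no component of $\overline{M\setminus N}$ is combinatorially invisible and every sub-branched-surface of $R$ still carries something positive.

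First I would address compressibility. Given a compressing disc $D$ for $S$, put $D$ in general position with respect to $\partial N$ and minimize $|D\cap \partial_v N|$ by isotopy. This cuts $D$ into sub-discs, each lying in $N$ or in $\overline{M\setminus N}$. A sub-disc in $N$ whose boundary is contained in $\partial_v N$ is a disc of contact, contradicting (i). A sub-disc in $\overline{M\setminus N}$ whose boundary meets $\partial_v N$ in a single fiber is a monogon, contradicting (iii). A sub-disc in $\overline{M\setminus N}$ whose boundary lies in $\partial_h N$ is a compressing disc for $\partial_h N$, contradicting (ii). Standard innermost-disc/outermost-arc arguments force at least one such configuration to appear. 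The $\partial$-compressible case is completely analogous, substituting half-discs of contact for discs of contact and invoking the boundary-incompressibility clause of (ii).

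Next, suppose $S$ is inessential only because it is $\partial$-parallel (or, if closed, bounds a trivial region). Then $S$ cobounds with a sub-surface of $\partial M$ an $I$-bundle region $W$. The fully carried surface $S_{0}$ must either enter $W$, in which case combining its transverse intersection with the $I$-fibers of $W$ and the fibers of $N$ produces a compressible annulus or torus carried by $R$ and bounding a solid torus in $M$; or $S_{0}$ avoids $W$ entirely, in which case the sectors of $R$ meeting $W$ receive no weight from $S_{0}$, contradicting positivity. Either way one extracts a Reeb component, contradicting (iv).

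The main obstacle will be the last step. Converting a parallelism region into a bona fide Reeb component in the precise sense of (iv) requires tracking how the positively weighted carried surface $S_{0}$ restricts the complementary $I$-bundle and then re-assembling the resulting annular or toral piece of $R$ into a surface that both is carried by $R$ and bounds a solid torus; this is where the hypothesis that $R$ \emph{carries} (and not merely admits) a positively weighted surface becomes indispensable, and it is the technical heart of Oertel's original argument in \cite{Oertel-84}.
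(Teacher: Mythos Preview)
The paper does not prove this theorem at all: it is quoted as a result of Oertel and used as a black box. Immediately before the statement the paper says ``The following theorem, proved by Oertel in \cite{Oertel-84}, let us infer if a surface carried by a branched surface is essential,'' and no proof is supplied. So there is no proof in the paper for your proposal to be compared against.

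That said, your outline is broadly in the spirit of Oertel's original argument (and Floyd--Oertel \cite{Floyd-Oertel}): one puts a putative compressing or boundary-compressing disc in general position relative to $N(R)$, minimizes intersections with $\partial_v N$, and argues that an innermost/outermost piece must be a disc of contact, a monogon, or a compression of $\partial_h N$. Your handling of the boundary-parallel case is the weakest part of the sketch: the passage from a parallelism $I$-bundle region to a Reeb component carried by $R$ is asserted rather than argued, and in Oertel's paper this is where the real work lies. But since the present paper simply cites the theorem, none of this is needed here.
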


\begin{lem}\label{lemma:incompressible}
The branched surface $R_g$,  $g\geq 1$,  is incompressible and without Reeb components.
\end{lem}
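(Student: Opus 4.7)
The plan is to verify, one at a time, each of the four defining conditions (i)--(iv) for an incompressible branched surface without Reeb components. I would treat (iv) first, since it follows from a weight-counting argument. At the smoothing-free circles $\partial_1 A$ and $\partial_2 A$ one has $W_P=W_{A_1}=W_{A_2}$; at the branching circle $A_1\cap U_1$ one has $W_{A_1}=W_{U_1}+W_{V_1}+W_{X_g}$; at $A_2\cap U_2$ one has $W_{A_2}=W_{U_2}+W_{V_2}$; at $A'\cap U_1$ one has $W_{U_1}+W_{X_g}=W_{V_1}+W_{A'}$; and an analogous equation holds at $A'\cap U_2$. A closed surface carried by $R_g$ has $W_P=0$, and these equations then propagate zero weights to every sheet, so $R_g$ carries no torus. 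For a properly embedded annulus in $E(K)$ the two boundary components lie on $\partial E(K)$, forcing $W_P=2$; the Euler-characteristic identity $0=-W_P+(2-2g)W_{X_g}$ admits no nonnegative solution for $g\geq 1$. Hence $R_g$ carries no Reeb component.

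For condition (i), I would inspect the branch locus of $R_g$ directly. After the small isotopy of Section \ref{section: branched surface} the branch locus is a finite disjoint union of simple closed curves, and the smoothing rules specified in the construction fix, at each circle, which sheets are merged on each side. A disc of contact in $N$ would project to a sub-disc of $R_g$ whose boundary follows the branch locus consistently with these rules; a short case analysis over the candidate loops in $\partial U_1$, $\partial U_2$, $\partial_1 A$, and $\partial_2 A$ shows that no such loop bounds in $R_g$. Half-discs of contact are ruled out similarly, since the only sheets meeting $\partial E(K)$ come from the knot-side boundaries of copies of $P$, and these cannot close up to form such a half-disc.

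For conditions (ii) and (iii), I would analyze the components of $\overline{E(K)\setminus N}$. A compressing disc, $\partial$-compressing disc, or monogon in any such region can be pushed off $N$ to yield a compression or $\partial$-compression of one of the essential surfaces sitting inside $R_g$: the meridional essential surface $X_g$ in $E(J_1)$ (from \cite{N-15}), the companion torus of the satellite knot $K$ (essential since $J=J_1\# J_2$ is nontrivial), or the Seifert torus $F=P\cup A$ (essential by Lemma \ref{lemma:genus} combined with the primeness of $K$ from Lemma \ref{lemma:prime}). Each of these is essential, so no such compression or monogon can exist, establishing both (ii) and (iii).

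The main obstacle is the explicit bookkeeping: tracking the complementary regions of $N$ in $E(K)$, matching their boundary pieces with the sheets of $R_g$, and verifying that a candidate compression or monogon in each region really reduces to a compression of one of the named essential surfaces. Once this combinatorial analysis is carried out, essentiality of $X_g$, of the companion torus, and of $F$, together with primeness of $K$, complete the proof.
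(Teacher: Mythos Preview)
Your approach mirrors the paper's: verify (iv) by weight propagation and an Euler-characteristic count, and verify (i)--(iii) by reducing any putative disc of contact, compression, or monogon to a compression of a known essential surface. The paper's argument for (iv) is phrased slightly differently (it argues section by section rather than writing down branch equations) but is the same propagation of $W_P=0$ for tori and the same negative-Euler-characteristic obstruction for annuli.

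Where your sketch is thinner than the paper is in the inventory of essential surfaces needed for (ii)--(iii). The paper explicitly decomposes $\overline{E(K)\setminus N(R_g)}$ into four regions $M_1$, $M_1'$, $M_2$, $M$ and handles each. For the two monogon cases in $M$ it uses \emph{two} distinct genus-one Seifert surfaces: your $F=Y_2=P\cup A_1\cup U_1\cup A'\cup U_2\cup A_2$ rules out the monogon at $c_2=V_2\cap A'$, but the monogon at $c_1=V_1\cap A'$ is ruled out by the \emph{other} Seifert torus $Y_1=P\cup A_1\cup V_1\cup A'\cup V_2\cup A_2$, built from the $V_i$ rather than the $U_i$. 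The surface $F$ alone does not see the $c_1$ monogon, so $Y_1$ must be added to your list. Similarly, in the region $M_2$ the torus that a monogon would compress is $U_2\cup V_2$, whose essentiality comes from $J_2$ being nontrivial; this is not the companion torus of $K$ (which encloses all of $J=J_1\# J_2$ and is what the paper uses for incompressibility of $A'$ in $M$), so it too must be added. Once you include $Y_1$ and $U_2\cup V_2$ alongside $X_g$, the companion torus, and $F$, your bookkeeping plan goes through exactly as in the paper.
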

\begin{proof}
First we observe that $R_g$ doesn't carry a Reeb component.  In fact,  if $R_g$ would carry a torus $T$,  this torus couldn't be transverse to the regular neighborhood of sections of $R_g$ with boundary components, or of sections of $R_g$ of genus bigger than $1$.  In this case,  it could only be transverse to regular neighborhoods of sections $A_1$, $A'$,  $X_g$ (in case $g=1$),  $U_1$,  $V_1$, $U_2$, $V_2$ and $A_2$.  Only one boundary component of $A_1$ and of $A_2$ is connected to these other sections. Hence, $A_1$ and $A_2$ cannot contribute for $R_g$ to carry a torus.  The annulli $U_2$ and $V_2$ are smoothed towards $A_2$.  As $W_{A_2}=0$ in an invariant measure for a torus carried by $R_g$,  we have that necessarily $W_{U_2}=W_{V_2}=0$ as well.  Similarly,  $U_1$, $V_1$ and $X_g$ are smoothed towards $A_1$.  As $W_{A_1}=0$ in an invariant measure for a torus carried by $R_g$,  we also have that  $W_{U_1}=W_{V_1}=W_{X_g}=0$.  Then $A'$ is the only section left whose regular neighborhood could be transverse to a torus.  As $A'$ is smoothed towards $U_1$,  $X_g$ and $V_2$, similarly we have that $W_{A'}=0$ in an invariant measure of a torus carried by $R_g$.  Therefore,  $R_g$ does not carry a torus.\\

If $R_g$ would carry a properly embedded annulus in the exterior of $K$,  $E(K)$,  this annulus would have to be transverse to the regular neighborhood of sections of $R_g$ with boundary components,  that is $P$. However, $P$ has negative Euler characteristics, and the remaining sections of $R_g$ have non-positive Euler characteristics. Hence, any surface carried by $R_g$ with positive weight on $P$ has negative Euler characteristics, which cannot be an annulus.  Therefore, $R_g$ does not carry an annulus.\\

Now we prove that $R_g$ is incompressible in $E(K)$. First observe that there are no (half) disks of contact as no circle on the branched locus of $R$ bounds a disk in $\partial_h N(R_g)$ and there are no properly embedded arcs on the branched locus of $R_g$. The space $N(R_g)$ decomposes $E(K)$ into four components: a component cut from $E(K)$ by $X_g$ and $V_1$, denoted $M_1$; a component cut from $E(K)$ by $X_g$ and $U_1$,  denoted $M_1'$; a component cut from $E(K)$ by $U_2$, and $V_2$, denoted $M_2$; a component cut from $E(K)$ by all sections but $X_g$ denoted $M$.\\

There are six components in $\partial_v N(R_g)$: one in $M_1$,  two in $M_1'$, one in $M_2$ and two in $M$.  The component of $\partial_v N(R_g)$ in $M_1$ corresponds to a non-separating circle in $X_g\cup V_1$.  Hence, a monogon in $M_1$  is a compressing disk for $X_g\cup V_1$ in $M_1$,  but this contradicts $X_g$ being essential in the exterior of $J_1$.  A monogon in $M_1'$ is impossible as the boundary of a monogon disk in $M_1'$ would have to intersect one boundary component of the annulus $U_1$ but be disjoint from the other boundary component,  as both correspond to components of $\partial_v N(R_g)$.  Therefore,  there are no monogons in $M_1$ and in $M_1'$.  Similarly, as $\partial_v N(R_g)$ in $M_2$ corresponds to a non-separating circle in $U_2\cup V_2$ and $U_2\cup V_2$ is essential in the exterior of $J_2$,   there are no monogons in $M_2$.\\
Each curve of the branched locus corresponding to the two components of $\partial_v N(R_g)$ in $M$ are non-separating in $\partial M$. Hence,  a monogon in $M$ corresponds to a compressing disk for $\partial M$ in $M$. Let $c_1$ be the curve $V_1\cap A'$ and $c_2$ be the curve $V_2\cap A'$. Suppose there is a monogon $O_1$ corresponding to $c_1$. Then the surface $Y_1$ defined by $P$, $A_1$, $V_1$, $A'$, $V_2$ and $A_2$ has a compressing disk $O_1$.  But $Y_1$ is a genus one surface bounded by $K$, which is incompressible because $K$ is non-trivial and hence we have a contradiction. Suppose there is a monogon $O_2$ corresponding to $c_2$.  Then the surface defined by $P, \, A_1, \, U_1, \, A', \,  U_2$ and $A_2$ has compressing disk $O_2$.  But $Y_2$ is a genus one surface bounded by $K$,  so it is a minimal genus Seifert surface and hence it is essential, contradicting $O_2$ being a compressing disk for $O_2$.  Therefore, there are no monogons in $M$.\\

We proceed to prove that $\partial_h N(R_g)$ is (boundary) incompressible in the exterior of $K$. In $M_1$,  $\partial_h N(R_g)$ corresponds to the complement of $A_1 \cap X_g \cap V_1$ in $X_g\cup V_1$. Hence,  as $X_g$ is incompressible in $M_1$ we have that $\partial_h N(R_g)\cap M_1$ is incompressible.  In $M_1'$, we argue similarly that  $\partial_h N(R_g)\cap M_1'$ is incompressible.  In $M_2$, $\partial_h N(R_g)\cap M_2$ corresponds to the complement of $A_2 \cap U_2 \cap V_2$ in $U_2\cup V_2$, which corresponds to the exterior in $S^3$ of the knot $J_2$. Hence, $\partial_h N(R_g)\cap M_2$ is incompressible in $M_2$. At last, in $M$, $\partial_h N(R_g)\cap M$ corresponds to the complement of $\partial A'$ in $\partial M$, which is represented by $A'$ and $P$, after an ambient isotopy. An essential simple closed curve in $A'$ corresponds to a meridian of the satellite torus of $K$ (with pattern $J_1\#J_2$), hence there is no compressing disk for $A'$ in $M$. The twice punctured disk $P$ is essential in $M$, otherwise a (boundary) compressing disk would also be a (boundary) compressing disk for the Seifert surface defined by $P\cup A$  (see also the end of proof for Lemma \ref{lemma:prime}).\\

Hence, $\partial_h N(R_g)$ is incompressible and boundary incompressible in $E(K)$, and there are no Reeb components,  monogons or disks of contact  in $R_g$.  Therefore, $R_g$,  $g\geq 1$, is an incompressible branched surface without Reeb components.
\end{proof}


\begin{proof}[Proof of Theorem \ref{main}.]
Let $K=K(2m+1; J)$, with $m$ an integer,  be a square double of a composite knot $J$ and $R_g$, $g\geq 1$,  as defined in section \ref{section: branched surface}.  The collection of such knots is infinite.  In fact,  the choice of companion $J$ can be for arbitrarily large bridge number,  and by work of Schubert,  so is the bridge number of a corresponding satellite $K$.\\
Now we prove that the knots $K$ are as in the statement of the theorem.  From Proposition \ref{lemma:incompressible} we have that $R_g$, $g\geq 1$, is an incompressible branched surface without Reeb components.  We also have that the surfaces $F_g^n$, $n\geq 3$, are carried with positive weights by $R_g$. Hence, we are under the conditions of Theorem \ref{Oertel}.  It then follows that all surfaces carried by $R_g$ are incompressible and boundary incompressible in $E(K)$. Therefore, as $F_g^n$, $n\geq 1$,  is carried by $R_g$, $g\geq 1$, and is orientable (Lemma \ref{lemma:orientable}),  it is essential in $E(K)$. From Lemma \ref{lemma:slope}, the surface $F_g^n$ has longitudinal boundary slope. From Lemma \ref{lemma:genus and boundary}, the surface $F_g^n$, $g\geq 1,\, n\geq 1$,  has genus $g$ and $n$  boundary components. Then,  as we wanted to prove,  each compact orientable connected surface of positive genus with boundary has an essential proper embedding into the exterior of $K$.
\end{proof}

If we apply and follow the proof of Theorem 2 of \cite{N-18} with the collection of knot exteriors and essential surfaces of Theorem \ref{main} of this paper,  we can conclude that there are hyperbolic 3-manifolds with torus boundary each with a collection of longitudinal essential surfaces of independently unbounded genus and any number of boundary components.\\ 
As a final remark,  note that the knots $J_1$ or $J_2$ can be chosen with closed essential surfaces in their exteriors of every genus or meridional essential surfaces of every genus and (even) number of boundary components.  (See \cite{N-18} for instance.) So,  in this case,  the exterior of $K$,  as in the proof of Theorem \ref{main},  has (meridional) planar essential surfaces in its exterior but with the number of boundary components divisible by 4.  Furthermore,  in this case,  the exterior of $K$ has in fact all compact surfaces of positive genus essentially embedded, closed and with boundary.  


\section{Arbitrarily many disjoint surfaces of any genus and number of boundary components.}\label{section:non-parallel}

There has been recent interest on the number of disjoint non-parallel punctured tori in a hyperbolic knot exterior in $S^3$. For once puntured tori in a knot exterior it has been proved by Valdez-Sánchez \cite{Valdez-Sanchez-19} that this number is at most five and that this bound is optimal. For higher number of boundary components, in \cite{Eudave-Teragaito-25} Eudave-Muñoz and Teragaito gave examples of hyperbolic knot exteriors with five non-parallel essential twice punctured tori, while in \cite{Aranda-Losada-Viorato-23} Aranda,  Ramírez-Losada and Rodríguez-Viorato proved that the number of  non-parallel essential twice punctured tori in hyperbolic exteriors is at most six.\\
In this section we show that for satellite knots there is no general bound on the number of disjoint non-parallel compact surfaces for any positive genus and any number of boundary components,  as stated in Theorem \ref{theorem:disjoint}.

\begin{thm}\label{theorem:disjoint}
For each positive integer $s$ there are infinitely many knots in the 3-sphere,  where each of which has in its exterior $s$ disjoint non-parallel longitudinal essential surface of any positive genus and any number of boundary components. 
\end{thm}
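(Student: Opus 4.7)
The plan is to generalize the construction of Sections \ref{section:square double}--\ref{section: surfaces} by letting the companion knot have many summands, each providing its own disjoint branched surface in the exterior of the square double. Concretely, I would take
\[
J=J_1^{(1)}\#J_1^{(2)}\#\cdots\#J_1^{(s)}\#J_2,
\]
where each $J_1^{(i)}$ is chosen (as in \cite{N-15}) so that $E(J_1^{(i)})$ contains meridional essential surfaces $X_g^{(i)}$ of any positive genus with two boundary components, and form $K=K(2m+1;J)$. By Lemmas \ref{lemma:prime} and \ref{lemma:genus}, $K$ is a prime knot of genus one. Its summand balls $Q_1,\dots,Q_s,Q_{s+1}$ are disjoint and can all be placed inside $B_2$ so that the annulus $A$ of the proof of Lemma \ref{lemma:genus} meets each $\partial Q_i$ in two circles; this partitions $A$ into sub-annuli generalizing the partition of Figure \ref{figure:annulus A} with one pair $(U^{(i)},V^{(i)})$ for each summand ball.

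For each $i=1,\ldots,s$ I would then construct a branched surface $R_g^{(i)}$ modeled on $R_g$ of Section \ref{section: branched surface}, but attaching the meridional surface $X_g^{(i)}$ only inside $Q_i$; inside the other $Q_j$, $j\ne i$, the branched surface $R_g^{(i)}$ passes only via the annular pieces $U^{(j)}$ and $V^{(j)}$ on $\partial Q_j$. To arrange that the $s$ branched surfaces are pairwise disjoint, I take the twice-punctured disk $P$ and the sub-annuli of $A$ used by $R_g^{(i)}$ from the $i$-th of $s$ mutually disjoint parallel push-offs of the genus-one Seifert surface $F$ within a tubular neighborhood of $F$. Each $R_g^{(i)}$ is incompressible and without Reeb components by a direct adaptation of Lemma \ref{lemma:incompressible}: the argument relies only on the essentiality of $X_g^{(i)}$ in $E(J_1^{(i)})$, of $U_2\cup V_2$ in $E(J_2)$, and of the minimal-genus Seifert surface of $K$, all of which remain valid in this setting. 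Theorem \ref{Oertel} then produces, just as in the proof of Theorem \ref{main}, essential longitudinal surfaces $F_g^{n,i}$ of genus $g$ with $n$ boundary components carried by each $R_g^{(i)}$.

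The main obstacle will be verifying that the resulting surfaces $F_g^{n,1},\dots,F_g^{n,s}$ are pairwise non-parallel in $E(K)$; disjointness is built into the construction. A parallelism between $F_g^{n,i}$ and $F_g^{n,j}$ would provide a product region $F_g^n\times I$ in $E(K)$ whose intersection with $Q_i$ would exhibit the meridional piece $X_g^{(i)}\subset F_g^{n,i}$ as isotopic, within $Q_i\smallsetminus J$, to the trivial annular piece of $F_g^{n,j}$ lying in $\partial Q_i$. This would boundary-compress $X_g^{(i)}$ into $\partial Q_i$ and contradict its essentiality as a meridional surface in the exterior of the summand $J_1^{(i)}$. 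Once non-parallelism is established, the infiniteness of the family of knots $K$ follows from Schubert's bridge-number result exactly as in the proof of Theorem \ref{main}, by letting, say, $J_1^{(1)}$ have arbitrarily large bridge number.
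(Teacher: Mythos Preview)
Your proposal is a valid alternative but takes a genuinely different route from the paper. The paper keeps almost all of $R_g$ fixed and varies only the annulus $U_2$: it chooses $J_2$ itself to be a connected sum of $s$ knots, so that $E(J_2)$ contains $s$ nested, pairwise non-parallel essential meridional annuli $U_2^0,\ldots,U_2^s$, and defines $R_{g;i}$ by replacing $U_2$ with $U_2^i$. The resulting surfaces $F_{g;i}^n$ differ from one another only in which of these nested annuli they run over; disjointness is obtained by pushing each surface one step in the $V_2$-direction toward its own $U_2^i$. By contrast, you multiply the $J_1$ summand, place the higher-genus meridional piece $X_g^{(i)}$ in a different summand ball for each $i$, and separate the entire branched surfaces by taking parallel push-offs of the Seifert surface $F$.

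The paper's approach is lighter: only a single annular sector changes, the verification that $R_{g;i}$ is incompressible and Reeb-free is literally the same as Lemma~\ref{lemma:incompressible}, and non-parallelism is inherited from the non-parallelism of the $U_2^i$ inside $E(J_2)$. Your approach is more symmetric and gives a somewhat more conceptual reason for non-parallelism (the surfaces are distinguished by which $Q_i$ carries the positive-genus meridional piece), but it costs you a longer set-up: a partition of $A$ into $2(s{+}1)+1$ sub-annuli, a branched surface with correspondingly more sectors, and a re-run of Lemma~\ref{lemma:incompressible} with $s{+}1$ complementary regions of the $M_1/M_1'/M_2$ type instead of three. None of this is an obstacle, but it is more to check.

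One point to tighten: your non-parallelism argument assumes that a product region $F_g^n\times I$ between $F_g^{n,i}$ and $F_g^{n,j}$ meets $\partial Q_i$ in a controlled way, so that it restricts to an isotopy between $X_g^{(i)}$ and an annular piece inside $Q_i\smallsetminus K$. A priori the product need not respect the spheres $\partial Q_i$, so you should insert an innermost-disk argument on $(\partial Q_i)\cap(F_g^n\times I)$ (using incompressibility of the four-punctured sphere $\partial Q_i\smallsetminus K$) to put the product in good position first. Once that is done your contradiction with the essentiality of $X_g^{(i)}$ goes through; the paper, for its part, asserts non-parallelism with essentially no argument, relying implicitly on the non-parallelism of the $U_2^i$.
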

\begin{proof}
Let $s$ be a positive integer greater than 1. The case $s=1$ is Theorem \ref{main}. For the proof of this theorem we follow the same procedure as in the proof of Theorem \ref{main}.  Let $K=K(2m+1; J)$ be a square double of a composite knot $J$,  as defined in section \ref{section:square double}, where we choose the summand $J_2$ such that is has a collection of $s$ nested non-parallel essential properly embedded annulli with meridional boundary in its exterior.  For instance, we can choose $J_2$ such that it is a connected sum of $s$ knots.\\
Let $U_2^i$, $i=0, \ldots, s$, be a collection of such annulli sharing boundary components with $U_2$, with $U_2^0=U_2$. Reconstructing a branched surface similarly to $R_g$ but replacing $U_2$ with $U_2^i$, for some  $i\in \{0, \ldots, s\}$, we obtain a branched surface $R_{g;i}$ where each section $U_2^i$ is smoothed as $U_2$ is smoothed. By replacing $U_2$ by $U_2^i$ in Figure \ref*{figure:branched surface} we have an illustration of $R_{g;i}$.\\
Following the proof of Theorem \ref{lemma:incompressible} we have that $R_{g;i}$ is an incompressible branched surface and without Reeb components. By replacing $U_2$ of $R_g$ with $U_2^i$ of $R_{g;i}$ in section \ref{section: surfaces}, we define the surfaces $F_{g;i}^n$. For $n\geq 3$, the surface $F_{g;i}^n$ is carried with positive weights by $R_{g;i}$. Hence, as for $R_g$, $R_{g;i}$ carries some surface with positive weights. Therefore, from Theorem \ref*{Oertel}, every surface carried by $R_{g;i}$ is essential, in particular every surface $F_{g;i}^n$ is essential.\\
We observe now that the surfaces $F_{g;i}^n$, for $i=0, \ldots , s$, after a small isotopy if necessary, are disjoint and  non-parallel. In fact, let us consider a pair $F_{g;i}^n$ and $F_{g;i+1}^n$. We push $F_{g;i+1}^n$ in the direction of the branched surface section $V_2$ towards $U_2^{i+1}$, extended to all surface,  such that $F_{g;i+1}^n$ becomes disjoint from $F_{g;i}^n$. This is possible because the surfaces are orientable, and hence have two sides. If $n$ is even the surfaces become nested, if $n$ is odd they are not nested but follow each other in a chosed direction. In Figure \ref{figure:disjoint} we have a schematic representation of these surfaces.

\begin{figure}[htbp]
	\labellist
	\small \hair 0pt
	\pinlabel (a) at 15 -8
	\tiny
	\pinlabel $F_{g;i+1}^3$ at 110 47
	\pinlabel $F_{g;i}^3$ at 114 0 
	\pinlabel $F_{g;i+1}^4$ at 353 48
	\pinlabel $F_{g;i}^4$ at 415 0 
	
	\small
	\pinlabel (b) at 310 -8
	\endlabellist
	\centering
	\includegraphics[width=\textwidth]{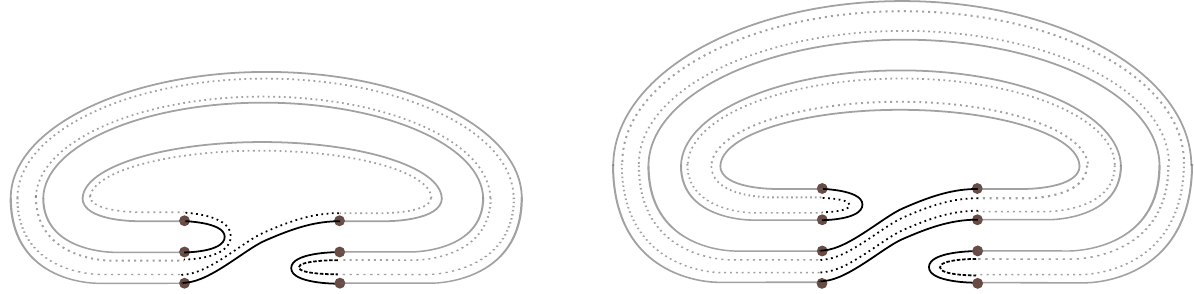}
	\caption{: A schematic representation of surfaces $F_{g;i}^n$ and $F_{g;i+1}^n$, in dashed line, for $n$ odd, in (a), and for $n$ even, in $(b)$, following the same notation as in Figure \ref{figure:schematic}. Note that for $n$ even the surfaces are nested, as they are separating, and for $n$ odd they are consecutive following a direction, as they are non-separating in the knot exterior.}
	\label{figure:disjoint}
\end{figure}

Following with the same procedure for each pair of surfaces, we obtain a collection of $s$ disjoint longitudinal essential surfaces of genus $g$ and $n$ boundary components in the exterior of $K$.     
\end{proof}

\section*{Acknowledgement}
The author would like to thank Cameron Gordon and John Luecke for conversations in this topic,  and Román Aranda for bringing to attention of the author and discussing the question addressed in section \ref{section:non-parallel}.

\end{document}